\numberwithin{equation}{section}
\newtheorem{theo}{Theorem}[section]
\newtheorem{lem}{Lemma}[section]
\newtheorem{cor}{Corollary}[section]
\newtheorem{rem}{Remark}[section]
\begin{document}
\title[On partial sums]
{On partial sums of normalized Mittag-Leffler functions}
\date{}
\author{ Dorina R\u aducanu}
\address{Faculty of Mathematics and Computer Science, "Transilvania" University of Bra\c sov 50091, Iuliu Maniu, 50, Bra\c sov, Romania, e-mail:\textit{ draducanu@unitbv.ro }}

\keywords{analytic functions, partial sums, Mittag-Leffler function, univalent function \\
{\indent \rm 2010 }\ {\it Mathematics Subject Classification:} \
33E12, 30A10}

\begin{abstract}
This article deals with the ratio of normalized Mittag-Leffler function $\mathbb{E}_{\alpha,\beta}(z)$ and its sequence of partial sums $(\mathbb{E}_{\alpha,\beta})_m(z)$. Several examples which illustrate the validity of our results are also given.
\end{abstract}
\maketitle

\section{Introduction}
Let $\mathcal{A}$ be the class of functions $f$ normalized by 
\begin{equation}\label{1.1}
f(z)=z+\sum_{n=2}^\infty a_n z^n
\end{equation}
which are analytic in the open unit disk $\mathcal{U}=\left\{z\in\mathbb{C}: |z|<1\right\}$.

Denote by $\mathcal{S}$ the subclass of $\mathcal{A}$ which consists of univalent functions in $\mathcal{U}$.

Consider the function $E_\alpha(z)$ defined by 
\begin{equation}\label{1.2}
E_\alpha(z)=\sum_{n=0}^\infty\frac{z^n}{\Gamma(\alpha n+1)},\;\alpha>0,\;z\in\mathcal{U},
\end{equation}
where $\Gamma(s)$ denotes the familiar Gamma function. This function was introduced by Mittag-Leffler in $1903$ \cite{mil} and is therefore known as the Mittag-Leffler function.

Another function $E_{\alpha,\beta}(z)$, having similar properties to those of Mittag-Leffler function, was introduced by Wiman \cite{wim}, \cite{wima} and is defined by 
\begin{equation}\label{1.3}
E_{\alpha,\beta}(z)=\sum_{n=0}^\infty\frac{z^n}{\Gamma(\alpha n+\beta)},\;\alpha>0,\;\beta>0,\;z\in\mathcal{U}.
\end{equation}

During the last years the interest in Mittag-Leffler type functions has considerably increased due to their vast potential of applications in applied problems such as fluid flow, electric networks, probability, statistical distribution theory etc. For a detailed account of properties, generalizations and applications of functions (\ref{1.2}) - (\ref{1.3}) one may refer to \cite{gmr}, \cite{gud}, \cite{pra}, \cite{shp}.

Geometric properties including starlikeness, convexity and close-to-convexity for the Mittag-Leffler function $E_{\alpha,\beta}(z)$ were recently investigated by Bansal and Prajapat in \cite{bap}. Differential subordination results associated with generalized Mittag-Leffler function were also obtained in \cite{dor}.

The function defined by (\ref{1.3}) does not belong to the class $\mathcal{A}$. Therefore, we consider the following normalization of the Mittag-Leffler function $E_{\alpha,\beta}(z)$:
\begin{equation}\label{1.4}
\mathbb{E}_{\alpha,\beta}(z)=\Gamma(\beta)zE_{\alpha,\beta}(z)=z+\sum_{n=1}^\infty\frac{\Gamma(\beta)}{\Gamma(\alpha n+\beta)}z^{n+1},\;\alpha>0,\;\beta>0,\; z\in\mathcal{U}.
\end{equation}

Note that some special cases of $\mathbb{E}_{\alpha,\beta}(z)$ are:
\begin{equation}\label{1.5}
\left\{\begin{array}{ll}
\mathbb{E}_{2,1}(z)=z\cosh{\sqrt{z}}\\
\mathbb{E}_{2,2}(z)=\sqrt{z}\sinh(\sqrt{z}) \\
\mathbb{E}_{2,3}(z)=2[\cosh(\sqrt{z})-1] \\
\mathbb{E}_{2,4}(z)=6[\sinh(\sqrt{z})-\sqrt{z}]/\sqrt{z}.
\end{array}
\right.
\end{equation}

Recently, several results related to partial sums of special functions, such as Bessel \cite{ory}, Struve \cite{yao}, Lommel 
\cite{mer} and Wright functions \cite{mmy} were obtained.

Motivated by the work of Bansal and Prajapat \cite{bap} and also by the above mentioned results, in this paper we investigate the ratio of normalized Mittag-Leffler function $\mathbb{E}_{\alpha,\beta}(z)$ defined by (\ref{1.4}) to its sequence of partial sums
\begin{equation}\label{1.6}
\left\{\begin{array}{ll}
(\mathbb{E}_{\alpha,\beta})_0(z)=z\\
(\mathbb{E}_{\alpha,\beta})_m(z)=z+\displaystyle\sum_{n=1}^m A_nz^{n+1},\;m\in\mathbb{N}=\left\{1,2,\ldots\right\},
\end{array}
\right.
\end{equation}
where
\begin{equation*}
A_n=\frac{\Gamma(\beta)}{\Gamma(\alpha n+\beta)},\;\alpha>0,\;\beta>0,\;n\in\mathbb{N}.
\end{equation*}

We obtain lower bounds on ratios like 
\begin{equation*}
\Re\left\{\frac{\mathbb{E}_{\alpha,\beta}(z)}{(\mathbb{E}_{\alpha,\beta})_m(z)}\right\},\;\Re\left\{\frac{(\mathbb{E}_{\alpha,\beta})_m(z)}{\mathbb{E}_{\alpha,\beta}(z)}\right\},\;\Re\left\{\frac{\mathbb{E}'_{\alpha,\beta}(z)}{(\mathbb{E}_{\alpha,\beta})'_m(z)}\right\},\;\Re\left\{\frac{(\mathbb{E}_{\alpha,\beta})'_m(z)}{\mathbb{E}'_{\alpha,\beta}(z)}\right\}.
\end{equation*}
Several examples will be also given.

Results concerning partial sums of analytic functions may be found in \cite{fra}, \cite{lio}, \cite{oss}, \cite{rav}, \cite{sil}, \cite{siv} etc.

\section{Main results}

In order to obtain our results we need the following lemma.
\begin{lem}\label{l2.1}
Let $\alpha\geq1$ and $\beta\geq1$. Then the function $\mathbb{E}_{\alpha,\beta}(z)$ satisfies the next two inequalities:
\begin{equation}\label{2.1}
|\mathbb{E}_{\alpha,\beta}(z)|\leq\frac{\beta^2+\beta+1}{\beta^2},\; z\in\mathcal{U}
\end{equation}
\begin{equation}\label{2.2}
|\mathbb{E}'_{\alpha,\beta}(z)|\leq\frac{\beta^2+3\beta+2}{\beta^2},\; z\in\mathcal{U}.
\end{equation}
\end{lem}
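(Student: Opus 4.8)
The plan is to bound $|\mathbb{E}_{\alpha,\beta}(z)|$ and $|\mathbb{E}'_{\alpha,\beta}(z)|$ termwise, using $|z|<1$ to dominate the series by the sum of the coefficient moduli, and then to control the coefficients $A_n=\Gamma(\beta)/\Gamma(\alpha n+\beta)$ by a geometric-type majorant. The key monotonicity fact I would establish first is that, for $\alpha\geq 1$ and $\beta\geq 1$, the sequence $(A_n)_{n\geq 1}$ is decreasing and, more quantitatively, that $A_{n+1}/A_n=\Gamma(\alpha n+\beta)/\Gamma(\alpha(n+1)+\beta)\leq 1/\beta$ for all $n\geq 1$. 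Indeed $\Gamma(\alpha(n+1)+\beta)=\Gamma(\alpha n+\beta+\alpha)\geq \Gamma(\alpha n+\beta+1)=(\alpha n+\beta)\,\Gamma(\alpha n+\beta)\geq (\alpha+\beta)\Gamma(\alpha n+\beta)\ge \beta\,\Gamma(\alpha n+\beta)$, using $\alpha\geq 1$, $n\ge 1$, and the fact that $\Gamma$ is increasing on $[1,\infty)$ (so $\alpha n+\beta+\alpha\ge \alpha n+\beta+1\ge 2$). This gives $A_n\leq A_1\beta^{-(n-1)}$ with $A_1=\Gamma(\beta)/\Gamma(\alpha+\beta)\leq 1/\beta$ (same estimate with $n=0$, valid since $\alpha+\beta\ge 2$).

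For \eqref{2.1}, I would then write
\[
|\mathbb{E}_{\alpha,\beta}(z)|\le |z|+\sum_{n=1}^\infty A_n|z|^{n+1}< 1+\sum_{n=1}^\infty A_n
\le 1+\frac{1}{\beta}\sum_{n=1}^\infty\frac{1}{\beta^{n-1}}
=1+\frac{1}{\beta}\cdot\frac{1}{1-1/\beta}.
\]
When $\beta>1$ the last expression equals $1+\frac{1}{\beta-1}$, which is not quite the claimed bound $1+\frac{1}{\beta}+\frac{1}{\beta^2}=1+\frac{\beta+1}{\beta^2}$; so a cruder geometric bound starting at $n=1$ is too weak. The fix is to keep the first term $A_1\le 1/\beta$ exactly and apply the ratio bound only from $n=2$ on: $\sum_{n=1}^\infty A_n = A_1+\sum_{n=2}^\infty A_n\le \frac1\beta + \frac1{\beta}\sum_{n=2}^\infty \beta^{-(n-1)} = \frac1\beta + \frac1{\beta}\cdot\frac{1/\beta}{1-1/\beta}$. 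For $\beta=1$ this is handled separately (the bound there is $3$, and one checks $E_{\alpha,1}$ termwise). Alternatively, and more cleanly, I would prove $A_n\le \beta^{-n}$ directly: from $A_1\le 1/\beta$ wait—$A_1\le\beta^{-1}$ only, not $\beta^{-2}$. The honest route is to observe $A_n\le 1/\beta^{?}$ must actually be $\sum_{n=1}^\infty A_n\le \frac{1}{\beta}+\frac{1}{\beta^2}=\frac{\beta+1}{\beta^2}$, which follows from $A_1\le \frac1\beta$, $A_2\le \frac1{\beta^2}$, and $A_n\le \frac1{\beta^{n}}$ for $n\ge 2$ with $\sum_{n\ge 3}\beta^{-n}\le$ ... this still overshoots for $\beta=1$. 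So the clean statement to prove by induction is $A_n\le \beta^{-n}$ for $n\ge 2$ together with $A_1\le\beta^{-1}$, and then bound $\sum_{n\ge 2}A_n\le \sum_{n\ge 2}\beta^{-n}$; for $\beta>1$ this gives $\le \frac{1}{\beta^2-\beta}$, still too large. The real mechanism must be sharper: I would instead use that $A_n\le A_1 A_{n-1}$ type submultiplicativity together with $A_1\le 1/\beta$, giving $\sum_{n\ge1}A_n \le \frac{A_1}{1-A_1}\le \frac{1/\beta}{1-1/\beta}=\frac{1}{\beta-1}$, and for the stated bound one then needs the extra refinement that $A_1\le \frac{1}{\beta(\beta+1)}$... which fails.

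Given these tensions, the approach I would actually commit to is: prove \textbf{(i)} $A_n\le A_2\,\beta^{-(n-2)}$ for $n\ge 2$ via the ratio estimate above, and \textbf{(ii)} the two anchor bounds $A_1\le 1/\beta$ and $A_2\le 1/\beta^2$, both from $\Gamma(\alpha n+\beta)\ge (\alpha(n-1)+\beta)\cdots(\beta)\cdot\Gamma(\beta)\ge \beta^n\Gamma(\beta)$ using $\alpha\ge1$ so each factor $\alpha k+\beta\ge \beta$ — wait, that gives $A_n\le\beta^{-n}$ for all $n\ge1$, hence $A_1\le\beta^{-1}$, $A_2\le\beta^{-2}$, and $\sum_{n\ge2}A_n\le\sum_{n\ge2}\beta^{-n}=\frac{1}{\beta^2-\beta}\le\frac{1}{\beta^2}$ precisely when $\beta\ge2$; for $1\le\beta<2$ one argues directly since the first few terms decay fast enough. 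Then \eqref{2.1} follows from $|\mathbb{E}_{\alpha,\beta}(z)|<1+A_1+\sum_{n\ge2}A_n\le 1+\frac1\beta+\frac1{\beta^2}$. For \eqref{2.2}, $\mathbb{E}'_{\alpha,\beta}(z)=1+\sum_{n\ge1}(n+1)A_n z^n$, so $|\mathbb{E}'_{\alpha,\beta}(z)|<1+\sum_{n\ge1}(n+1)A_n\le 1+\sum_{n\ge1}(n+1)\beta^{-n}=1+\frac{2}{\beta}+\sum_{n\ge2}(n+1)\beta^{-n}$, and the tail $\sum_{n\ge2}(n+1)\beta^{-n}$ is bounded by $\frac{2}{\beta^2}$ for $\beta\ge2$, giving $1+\frac{2}{\beta}+\frac{2}{\beta^2}=\frac{\beta^2+2\beta+2}{\beta^2}$; the stated bound is $\frac{\beta^2+3\beta+2}{\beta^2}$, which is \emph{larger}, so there is slack, and the small-$\beta$ range $1\le\beta<2$ is absorbed into this slack by a direct estimate.

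The main obstacle, as the above makes clear, is not any single inequality but \emph{bookkeeping the geometric tails tightly enough near $\beta=1$ while matching the clean closed forms $\frac{\beta^2+\beta+1}{\beta^2}$ and $\frac{\beta^2+3\beta+2}{\beta^2}$}: the crude bound $A_n\le\beta^{-n}$ is lossless at $\beta=1$ only in the leading terms and one must check that the resulting series sums, $\sum_{n\ge1}\beta^{-n}$ and $\sum_{n\ge1}(n+1)\beta^{-n}$, do not exceed the target tails $\frac1\beta+\frac1{\beta^2}$ and $\frac2\beta+\frac2{\beta^2}$ — which requires a short case split at $\beta=1$ (where $\sum\beta^{-n}$ diverges and one must instead use the genuine faster-than-geometric decay of $A_n=1/\Gamma(\alpha n+1)$). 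I expect the write-up to isolate a single clean inequality $\sum_{n\ge1}A_n\le \frac{\beta+1}{\beta^2}$ (proved by comparing $A_n$ to $\beta^{-n}$ for $\beta\ge 2$ and by a finite check plus factorial decay for $1\le\beta<2$), from which both \eqref{2.1} and \eqref{2.2} follow since $(n+1)A_n\le 2A_n+\big((n-1)A_n\big)$ and $(n-1)A_n$ is again summable with small sum; the factorial growth of $\Gamma(\alpha n+\beta)$ makes every such tail harmless, so the only real care is at the boundary $\beta=1$, $\alpha=1$.
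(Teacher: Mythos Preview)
Your proposal is not a proof but an exploration that never settles, and the reason it never settles is that you discard the one inequality that makes everything work. You correctly observe that
\[
\Gamma(\alpha(n+1)+\beta)\ \ge\ (\alpha n+\beta)\,\Gamma(\alpha n+\beta)\ \ge\ (\alpha+\beta)\,\Gamma(\alpha n+\beta),
\]
which gives $A_{n+1}/A_n\le 1/(\alpha+\beta)\le 1/(\beta+1)$ since $\alpha\ge 1$. But you then weaken this to $A_{n+1}/A_n\le 1/\beta$, and from that moment on you are forced into case splits, divergent tails at $\beta=1$, and the ``tensions'' you describe. The crude bound $A_n\le\beta^{-n}$ simply cannot yield $\sum_{n\ge1}A_n\le(\beta+1)/\beta^2$ uniformly on $\beta\ge1$, because $\sum\beta^{-n}=1/(\beta-1)$ blows up as $\beta\downarrow1$.

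The paper's argument avoids all of this by keeping the sharper ratio. It first reduces to $\alpha=1$ via $\Gamma(\alpha n+\beta)\ge\Gamma(n+\beta)$, so that $A_n\le 1/(\beta)_n$, and then uses the clean Pochhammer estimate
\[
(\beta)_n=\beta\,(\beta+1)_{n-1}\ \ge\ \beta\,(\beta+1)^{n-1},
\]
i.e.\ separate off the first factor $\beta$ and bound each of the remaining $n-1$ factors below by $\beta+1$. This gives a geometric series with ratio $1/(\beta+1)<1$ for every $\beta\ge1$, and summing yields exactly $\sum_{n\ge1}A_n\le\frac{1}{\beta}\cdot\frac{\beta+1}{\beta}=\frac{\beta+1}{\beta^2}$, hence \eqref{2.1}. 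For \eqref{2.2} the paper splits $(n+1)A_n=nA_n+A_n$ and handles $nA_n$ via $n/(\beta)_n\le 1/\bigl(\beta(\beta+1)_{n-2}\bigr)$, again a $(\beta+1)$-geometric tail; the two pieces sum to $\frac{1}{\beta}+\frac{2(\beta+1)}{\beta^2}=\frac{3\beta+2}{\beta^2}$. No case analysis, no boundary difficulty at $\beta=1$. Had you kept your own intermediate bound $A_{n+1}/A_n\le 1/(\beta+1)$ together with $A_1\le 1/\beta$, you would have arrived at the same one-line computation.
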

\begin{proof}
Under the hypothesis we have $\Gamma(n+\beta)\leq\Gamma(\alpha n+\beta)$ and thus 
\begin{equation}\label{2.3}
\frac{\Gamma(\beta)}{\Gamma(\alpha n+\beta)}\leq\frac{\Gamma(\beta)}{\Gamma(n+\beta)}=\frac{1}{(\beta)_n},\; n\in\mathbb{N},
\end{equation}
where
\[
(x)_n=\left\{\begin{array}{ll}
1 & \mbox{, $n=0$}\\
x(x+1)\ldots(x+n-1) & \mbox{,$n\in\mathbb{N}$ }
\end{array}
\right.
\]
is the well-known Pochhammer symbol.

Note that 
\begin{equation}\label{2.4}
(x)_n=x(x+1)_{n-1},\;\;n\in\mathbb{N}
\end{equation}
and
\begin{equation}\label{2.5}
(x)_n\geq x^n,\;\;n\in\mathbb{N}.
\end{equation}
Making use of (\ref{2.3}) - (\ref{2.5}) and also of the well-known triangle inequality, for $z\in\mathcal{U}$, we obtain
\begin{equation*}
|\mathbb{E}_{\alpha,\beta}(z)|=\left|z+\sum_{n=1}^\infty\frac{\Gamma(\beta)}{\Gamma(\alpha n+\beta)}z^{n+1}\right|\leq 1+\sum_{n=1}^\infty\frac{\Gamma(\beta)}{\Gamma(\alpha n+\beta)}\leq 1+\sum_{n=1}^\infty\frac{1}{(\beta)_n}
\end{equation*}
\begin{equation*}
=1+\frac{1}{\beta}\sum_{n=1}^\infty\frac{1}{(\beta+1)_{n-1}}\leq 1+\frac{1}{\beta}\sum_{n=1}^\infty\frac{1}{(\beta+1)^{n-1}}=1+\frac{1}{\beta}\sum_{n=0}^\infty\left(\frac{1}{\beta+1}\right)^n=\frac{\beta^2+\beta+1}{\beta^2}
\end{equation*}
and thus, inequality (\ref{2.1}) is proved.

Using once more the triangle inequality, for $z\in\mathcal{U}$, we obtain
\begin{equation}\label{2.6}
|\mathbb{E}'_{\alpha,\beta}(z)|=\left|1+\sum_{n=1}^\infty\frac{(n+1)\Gamma(\beta)}{\Gamma(\alpha n+\beta)}z^n\right|\leq 1+\sum_{n=1}^\infty\frac{n\Gamma(\beta)}{\Gamma(\alpha n+\beta)}+\sum_{n=1}^\infty\frac{\Gamma(\beta)}{\Gamma(\alpha n+\beta)}.
\end{equation}
For $\beta\geq1$ we have
\begin{equation}\label{2.7}
\frac{n}{(\beta)_n}=\frac{n}{\beta(\beta+1)_{n-1}}=\frac{n}{\beta(\beta+1)_{n-2}(\beta+n-1)}\leq\frac{1}{\beta(\beta+1)_{n-2}}.
\end{equation}
Taking into account inequalities (\ref{2.3}) - (\ref{2.5}) and (\ref{2.7}), from (\ref{2.6}), we obtain 
\begin{equation*}
|\mathbb{E}'_{\alpha,\beta}(z)|\leq 1+\sum_{n=1}^\infty\frac{n}{(\beta)_n}+\sum_{n=1}^\infty\frac{1}{(\beta)_n}\leq 1+\frac{1}{\beta}+\frac{1}{\beta}\sum_{n=2}^\infty\frac{1}{(\beta+1)_{n-2}}+\frac{1}{\beta}\sum_{n=1}^\infty\frac{1}{(\beta+1)_{n-1}}
\end{equation*}
\begin{equation*}
\leq 1+\frac{1}{\beta}+\frac{1}{\beta}\sum_{n=0}^\infty\left(\frac{1}{\beta+1}\right)^n+\frac{1}{\beta}\sum_{n=0}^\infty\left(\frac{1}{\beta+1}\right)^n=\frac{\beta^2+3\beta+2}{\beta^2}
\end{equation*}
and thus, inequality (\ref{2.2}) is also proved.
\end{proof}

Let $w(z)$ be an analytic function in $\mathcal{U}$. In the sequel, we will frequently use the following well-known result:
\begin{equation*}
\Re\left\{\frac{1+w(z)}{1-w(z)}\right\}>0,\;z\in\mathcal{U}\;\;\textrm{if and only if}\;\;|w(z)|<1,\;z\in\mathcal{U}.
\end{equation*}
\begin{theo}\label{t2.1}
Let $\alpha\geq1$ and $\beta\geq\displaystyle\frac{1+\sqrt{5}}{2}$. Then
\begin{equation}\label{2.8}
\Re\left\{\frac{\mathbb{E}_{\alpha,\beta}(z)}{(\mathbb{E}_{\alpha,\beta})_m(z)}\right\}\geq\frac{\beta^2-\beta-1}{\beta^2},\;z\in\mathcal{U}
\end{equation}
and
\begin{equation}\label{2.9}
\Re\left\{\frac{(\mathbb{E}_{\alpha,\beta})_m(z)}{\mathbb{E}_{\alpha,\beta}(z)}\right\}\geq\frac{\beta^2}{\beta^2+\beta+1},\;z\in\mathcal{U}.
\end{equation}
\end{theo}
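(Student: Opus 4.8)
The plan is to use the Schwarz-function technique encoded in the equivalence recalled just before the statement. For \eqref{2.8} set $g(z)=\mathbb{E}_{\alpha,\beta}(z)/(\mathbb{E}_{\alpha,\beta})_m(z)$ and $c=(\beta^2-\beta-1)/\beta^2$. The hypothesis $\beta\ge(1+\sqrt5)/2$ is precisely the statement $\beta^2-\beta-1\ge0$, so $0\le c<1$ and $1-c=(\beta+1)/\beta^2$. I would introduce the analytic function
\[
w(z)=\frac{g(z)-1}{g(z)+1-2c},
\]
which satisfies $w(0)=0$ (since $g(0)=1$) and is designed so that $\dfrac{1+w(z)}{1-w(z)}=\dfrac{g(z)-c}{1-c}$. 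By the quoted equivalence, once $|w(z)|\le1$ on $\mathcal U$ is established we obtain $\Re\{(g(z)-c)/(1-c)\}\ge0$, i.e. $\Re\{g(z)\}\ge c$, which is \eqref{2.8}.

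To estimate $w$, note $\mathbb{E}_{\alpha,\beta}(z)-(\mathbb{E}_{\alpha,\beta})_m(z)=\sum_{n=m+1}^\infty A_nz^{n+1}$; expanding $\mathbb{E}_{\alpha,\beta}(z)+(1-2c)(\mathbb{E}_{\alpha,\beta})_m(z)$ as a power series and cancelling the common factor $z$ gives
\[
w(z)=\frac{\sum_{n=m+1}^\infty A_nz^{n}}{(2-2c)+(2-2c)\sum_{n=1}^m A_nz^{n}+\sum_{n=m+1}^\infty A_nz^{n}} .
\]
For $|z|<1$ the triangle inequality (using $2-2c>0$) yields $|w(z)|\le1$ provided
\[
2\sum_{n=m+1}^\infty A_n+(2-2c)\sum_{n=1}^m A_n\le2-2c .
\]
Since $c\ge0$ we have $2-2c\le2$, so the left-hand side is at most $2\sum_{n=1}^\infty A_n$; and the computation in the proof of Lemma~\ref{l2.1} shows $\sum_{n=1}^\infty A_n\le(\beta+1)/\beta^2=1-c$. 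Hence the displayed inequality holds, $|w(z)|\le1$ on $\mathcal U$, and \eqref{2.8} follows.

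For \eqref{2.9} the same scheme is applied to $h(z)=(\mathbb{E}_{\alpha,\beta})_m(z)/\mathbb{E}_{\alpha,\beta}(z)$ with $d=\beta^2/(\beta^2+\beta+1)$, so that $1-d=(\beta+1)/(\beta^2+\beta+1)$ and $2d-1=(\beta^2-\beta-1)/(\beta^2+\beta+1)\ge0$, i.e. $\tfrac12\le d<1$. Taking $w(z)=(h(z)-1)/(h(z)+1-2d)$ and proceeding exactly as above produces
\[
w(z)=\frac{-\sum_{n=m+1}^\infty A_nz^{n}}{(2-2d)+(2-2d)\sum_{n=1}^m A_nz^{n}+(1-2d)\sum_{n=m+1}^\infty A_nz^{n}} .
\]
Here the tail coefficient in the denominator is $1-2d\le0$, so $|1-2d|=2d-1$, and the triangle inequality reduces $|w(z)|\le1$ on $\mathcal U$ to
\[
2d\sum_{n=m+1}^\infty A_n+(2-2d)\sum_{n=1}^m A_n\le2-2d .
\]
Since $d\ge\tfrac12$ we have $2-2d\le2d$, so the left-hand side is at most $2d\sum_{n=1}^\infty A_n\le2d(\beta+1)/\beta^2$, and a direct computation shows $2d(\beta+1)/\beta^2=2-2d$. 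Thus the inequality holds and \eqref{2.9} follows.

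The only delicate point is the bookkeeping: one must choose the M\"obius normalization of $w$ so that $w(0)=0$, and, for \eqref{2.9}, keep careful track of the sign of the coefficient $1-2d$ multiplying the tail $\sum_{n\ge m+1}A_nz^{n}$ in the denominator before applying the triangle inequality. This is exactly where the threshold $\beta\ge(1+\sqrt5)/2$ is used — it forces $c\ge0$ and $d\ge\tfrac12$ — after which both assertions collapse to the single coefficient estimate $\sum_{n\ge1}A_n\le(\beta+1)/\beta^2$ already contained in Lemma~\ref{l2.1}.
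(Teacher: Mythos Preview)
Your proof is correct and is essentially identical to the paper's own argument: the function $w$ you introduce via $\frac{1+w}{1-w}=\frac{g-c}{1-c}$ (respectively $\frac{h-d}{1-d}$) is exactly the paper's $w$ after clearing the factor $1-c$ (respectively $1-d$), and your sufficient inequalities $2\sum_{n>m}A_n+(2-2c)\sum_{n\le m}A_n\le 2-2c$ and $2d\sum_{n>m}A_n+(2-2d)\sum_{n\le m}A_n\le 2-2d$ are precisely the paper's conditions (2.11) and (2.12) multiplied through by $2(1-c)$ and $2(1-d)$. The only cosmetic difference is that you carry the abbreviations $c,d$ while the paper keeps the explicit $\beta$-expressions, and that you close the estimate for \eqref{2.9} by noting $2d(\beta+1)/\beta^{2}=2-2d$ whereas the paper phrases the same computation as bounding the left side of (2.12) by $\frac{\beta^{2}}{\beta+1}\sum_{n\ge1}A_n\le 1$.
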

\begin{proof}
From inequality (\ref{2.1}) we get
\begin{equation*}
1+\sum_{n=1}^\infty A_n\leq\frac{\beta^2+\beta+1}{\beta^2},\;\;\textrm{where}\;\;A_n=\frac{\Gamma(\beta)}{\Gamma(\alpha n+\beta)},\;\;n\in\mathbb{N}.
\end{equation*}
The last inequality is equivalent to
\begin{equation*}
\frac{\beta^2}{\beta+1}\sum_{n=1}^\infty A_n\leq1.
\end{equation*}
In order to prove the inequality (\ref{2.8}), we consider the function $w(z)$ defined by 
\begin{equation*}
\frac{1+w(z)}{1-w(z)}=\frac{\beta^2}{\beta+1}\frac{\mathbb{E}_{\alpha,\beta}(z)}{(\mathbb{E}_{\alpha,\beta})_m(z)}-\frac{\beta^2-\beta-1}{\beta+1}
\end{equation*}
or
\begin{equation}\label{2.10}
\frac{1+w(z)}{1-w(z)}=\frac{1+\displaystyle\sum_{n=1}^m A_nz^n+\frac{\beta^2}{\beta+1}\sum_{n=m+1}^\infty A_nz^n}{1+\displaystyle\sum_{n=1}^m A_nz^n}.
\end{equation}
From (\ref{2.10}), we obtain 
\begin{equation*}
w(z)=\frac{\displaystyle\frac{\beta^2}{\beta+1}\sum_{n=m+1}^\infty A_nz^n}{2+2\displaystyle\sum_{n=1}^m A_nz^n+\frac{\beta^2}{\beta+1}\sum_{n=m+1}^\infty A_nz^n}
\end{equation*}
and
\begin{equation*}
|w(z)|<\frac{\displaystyle\frac{\beta^2}{\beta+1}\sum_{n=m+1}^\infty A_n}{2-2\displaystyle\sum_{n=1}^m A_n-\frac{\beta^2}{\beta+1}\sum_{n=m+1}^\infty A_n}.
\end{equation*}
The inequality $|w(z)|<1$ holds true if and only if 
\begin{equation*}
\frac{2\beta^2}{\beta+1}\sum_{n=m+1}^\infty A_n\leq 2-2\sum_{n=1}^m A_n
\end{equation*}
which is equivalent to 
\begin{equation}\label{2.11}
\sum_{n=1}^m A_n+\frac{\beta^2}{\beta+1}\sum_{n=m+1}^\infty A_n\leq1.
\end{equation}
To prove (\ref{2.11}), it suffices to show that its left-hand side is bounded above by 
\begin{equation*}
\frac{\beta^2}{\beta+1}\sum_{n=1}^\infty A_n
\end{equation*}
which is equivalent to 
\begin{equation*}
\frac{\beta^2-\beta-1}{\beta+1}\sum_{n=1}^m A_n\geq 0.
\end{equation*}
The last inequality holds true for  $\beta\geq\displaystyle\frac{1+\sqrt{5}}{2}$.

We use the same method to prove inequality (\ref{2.9}). Consider the function $w(z)$ given by
\begin{equation*}
\frac{1+w(z)}{1-w(z)}=\frac{\beta^2+\beta+1}{\beta+1}\frac{(\mathbb{E}_{\alpha,\beta})_m(z)}{\mathbb{E}_{\alpha,\beta}(z)}-\frac{\beta^2}{\beta+1}.
\end{equation*}
From the last equality we obtain
\begin{equation*}
w(z)=\frac{-\displaystyle\frac{\beta^2+\beta+1}{\beta+1}\sum_{n=m+1}^\infty A_nz^n}{2+2\displaystyle\sum_{n=1}^m A_nz^n-\frac{\beta^2-\beta-1}{\beta+1}\sum_{n=m+1}^\infty A_nz^n}
\end{equation*}
and 
\begin{equation*}
|w(z)|<\frac{\displaystyle\frac{\beta^2+\beta+1}{\beta+1}\sum_{n=m+1}^\infty A_n}{2-2\displaystyle\sum_{n=1}^m A_n-\frac{\beta^2-\beta-1}{\beta+1}\sum_{n=m+1}^\infty A_n}.
\end{equation*}
Then, $|w(z)|<1$ if and only if 
\begin{equation}\label{2.12}
\frac{\beta^2}{\beta+1}\sum_{n=m+1}^\infty A_n+\sum_{n=1}^m A_n\leq 1.
\end{equation}
Since the left-hand side of (\ref{2.12}) is bounded above by
\begin{equation*}
\frac{\beta^2}{\beta+1}\sum_{n=1}^\infty A_n
\end{equation*}
we have that the inequality (\ref{2.9}) holds true.
Now, the proof of our theorem is completed. 
\end{proof}

In the next theorem we consider ratios involving derivatives.
\begin{theo}\label{2.2}
Let $\alpha\geq 1$ and let $\beta\geq\displaystyle\frac{3+\sqrt{17}}{2}$. Then
\begin{equation}\label{2.13}
\Re\left\{\frac{\mathbb{E}'_{\alpha,\beta}(z)}{(\mathbb{E}_{\alpha,\beta})'_m(z)}\right\}\geq\frac{\beta^2-3\beta-2}{\beta^2},\;z\in\mathcal{U}
\end{equation}
and
\begin{equation}\label{2.14}
\Re\left\{\frac{(\mathbb{E}_{\alpha,\beta})'_m(z)}{\mathbb{E}'_{\alpha,\beta}(z)}\right\}\geq\frac{\beta^2}{\beta^2+3\beta+2},\;z\in\mathcal{U}.
\end{equation}
\end{theo}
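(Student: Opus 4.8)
The plan is to follow the proof of Theorem \ref{t2.1} step by step, simply replacing the bound (\ref{2.1}) by its derivative analogue (\ref{2.2}). First I would rewrite (\ref{2.2}), using $\mathbb{E}'_{\alpha,\beta}(z)=1+\sum_{n=1}^\infty(n+1)A_nz^n$, as
\begin{equation*}
1+\sum_{n=1}^\infty (n+1)A_n\leq\frac{\beta^2+3\beta+2}{\beta^2},
\end{equation*}
which is equivalent to the estimate
\begin{equation*}
\frac{\beta^2}{3\beta+2}\sum_{n=1}^\infty (n+1)A_n\leq1.
\end{equation*}
This plays here exactly the role that $\frac{\beta^2}{\beta+1}\sum_{n=1}^\infty A_n\leq1$ played in Theorem \ref{t2.1}. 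Recall also that $(\mathbb{E}_{\alpha,\beta})'_m(z)=1+\sum_{n=1}^m(n+1)A_nz^n$.

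To prove (\ref{2.13}) I would introduce the analytic function $w(z)$ determined by
\begin{equation*}
\frac{1+w(z)}{1-w(z)}=\frac{\beta^2}{3\beta+2}\,\frac{\mathbb{E}'_{\alpha,\beta}(z)}{(\mathbb{E}_{\alpha,\beta})'_m(z)}-\frac{\beta^2-3\beta-2}{3\beta+2},
\end{equation*}
the two coefficients being chosen so that their difference equals $1$ (so the right-hand side equals $1$ when $w\equiv0$). Solving for $w(z)$ as in (\ref{2.10}) gives
\begin{equation*}
w(z)=\frac{\displaystyle\frac{\beta^2}{3\beta+2}\sum_{n=m+1}^\infty(n+1)A_nz^n}{2+2\displaystyle\sum_{n=1}^m(n+1)A_nz^n+\frac{\beta^2}{3\beta+2}\sum_{n=m+1}^\infty(n+1)A_nz^n},
\end{equation*}
and the triangle inequality shows that $|w(z)|<1$ in $\mathcal{U}$ as soon as
\begin{equation*}
\sum_{n=1}^m(n+1)A_n+\frac{\beta^2}{3\beta+2}\sum_{n=m+1}^\infty(n+1)A_n\leq1.
\end{equation*}
Its left-hand side is at most $\frac{\beta^2}{3\beta+2}\sum_{n=1}^\infty(n+1)A_n$ precisely when $\frac{\beta^2}{3\beta+2}\geq1$, that is $\beta^2-3\beta-2\geq0$, i.e. $\beta\geq\frac{3+\sqrt{17}}{2}$; combined with the estimate above this yields (\ref{2.13}).

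For (\ref{2.14}) I would run the symmetric argument with
\begin{equation*}
\frac{1+w(z)}{1-w(z)}=\frac{\beta^2+3\beta+2}{3\beta+2}\,\frac{(\mathbb{E}_{\alpha,\beta})'_m(z)}{\mathbb{E}'_{\alpha,\beta}(z)}-\frac{\beta^2}{3\beta+2},
\end{equation*}
whose coefficients again differ by $1$. The same computation leads to
\begin{equation*}
w(z)=\frac{-\displaystyle\frac{\beta^2+3\beta+2}{3\beta+2}\sum_{n=m+1}^\infty(n+1)A_nz^n}{2+2\displaystyle\sum_{n=1}^m(n+1)A_nz^n-\frac{\beta^2-3\beta-2}{3\beta+2}\sum_{n=m+1}^\infty(n+1)A_nz^n},
\end{equation*}
and, after applying the triangle inequality, to the condition
\begin{equation*}
\sum_{n=1}^m(n+1)A_n+\frac{\beta^2}{3\beta+2}\sum_{n=m+1}^\infty(n+1)A_n\leq1,
\end{equation*}
i.e. exactly the inequality already treated, again valid for $\beta\geq\frac{3+\sqrt{17}}{2}$.

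The computations are entirely routine; the hypothesis on $\beta$ is genuinely needed in two places that must be watched. First, in the reduction step one needs $\frac{\beta^2}{3\beta+2}\geq1$ in order to absorb the partial sum into the tail bound. Second, in estimating the denominator of $w(z)$ in the proof of (\ref{2.14}), the triangle inequality produces the stated lower bound only if the coefficient $\frac{\beta^2-3\beta-2}{3\beta+2}$ is nonnegative, i.e. again $\beta^2-3\beta-2\geq0$. Both requirements collapse to $\beta\geq\frac{3+\sqrt{17}}{2}$, which is why this constant appears. The one bookkeeping point is to carry the weights $(n+1)$ through the series and use the estimate (\ref{2.2}) rather than (\ref{2.1}); everything else is identical to Theorem \ref{t2.1}.
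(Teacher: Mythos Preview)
Your proof is correct and follows essentially the same approach as the paper: the same auxiliary functions $w(z)$, the same reduction to the inequality $\sum_{n=1}^m(n+1)A_n+\frac{\beta^2}{3\beta+2}\sum_{n=m+1}^\infty(n+1)A_n\leq1$, and the same use of Lemma~\ref{l2.1}(\ref{2.2}) together with $\beta^2-3\beta-2\geq0$. Your additional remark about the second place where the sign of $\beta^2-3\beta-2$ matters (the denominator estimate for (\ref{2.14})) is a nice clarification not made explicit in the paper.
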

\begin{proof}
From (\ref{2.2}) we have
\begin{equation*}
1+\sum_{n=1}^\infty (n+1)A_n\leq\frac{\beta^2+3\beta+2}{\beta^2}, \;\;\textrm{where}\;\;A_n=\frac{\Gamma(\beta)}{\Gamma(\alpha n+\beta)},\;n\in\mathbb{N}.
\end{equation*}
The above inequality is equivalent to
\begin{equation*}
\frac{\beta^2}{3\beta+2}\sum_{n=1}^\infty (n+1)A_n\leq 1.
\end{equation*}
To prove (\ref{2.13}), define the function $w(z)$ by
\begin{equation*}
\frac{1+w(z)}{1-w(z)}=\frac{\beta^2}{3\beta+2}\frac{\mathbb{E}'_{\alpha,\beta}(z)}{(\mathbb{E}_{\alpha,\beta})'_m(z)}-\frac{\beta^2-3\beta-2}{3\beta+2}
\end{equation*}
which gives
\begin{equation*}
w(z)=\frac{\displaystyle\frac{\beta^2}{3\beta+2}\sum_{n=m+1}^\infty(n+1)A_nz^n}{2+2\displaystyle\sum_{n=1}^m (n+1)A_nz^n+\frac{\beta^2}{3\beta+2}\sum_{n=m+1}^\infty (n+1)A_nz^n}
\end{equation*}
and
\begin{equation*}
|w(z)|<\frac{\displaystyle\frac{\beta^2}{3\beta+2}\sum_{n=m+1}^\infty(n+1)A_n}{2-2\displaystyle\sum_{n=1}^m (n+1)A_n-\frac{\beta^2}{3\beta+2}\sum_{n=m+1}^\infty (n+1)A_n}.
\end{equation*}
The condition $|w(z)|<1$ holds true if and only if
\begin{equation}\label{2.15}
\sum_{n=1}^m (n+1)A_n+\frac{\beta^2}{3\beta+2}\sum_{n=m+1}^\infty(n+1)A_n\leq 1.
\end{equation}
The left-hand side of (\ref{2.15}) is bounded above by 
\begin{equation*}
\frac{\beta^2}{3\beta+2}\sum_{n=1}^\infty (n+1)A_n\;\;\;\textrm{if}\;\;\;\;\frac{\beta^2-3\beta-2}{3\beta+2}\sum_{n=1}^m (n+1)A_n\geq 0
\end{equation*}
which holds true for $\beta\geq\displaystyle\frac{3+\sqrt{17}}{2}$.

The proof of (\ref{2.14}) follows the same pattern. Consider the function $w(z)$ given by
\begin{equation*}
\frac{1+w(z)}{1-w(z)}=\frac{\beta^2+3\beta+2}{3\beta+2}\frac{(\mathbb{E}_{\alpha,\beta})'_m(z)}{\mathbb{E}'_{\alpha,\beta}(z)}-\frac{\beta^2}{3\beta+2}
\end{equation*}
\begin{equation}\label{2.16}
=\frac{1+\displaystyle\sum_{n=1}^m(n+1)A_nz^n-\frac{\beta^2}{3\beta+2}\sum_{n=m+1}^\infty (n+1)A_nz^n}{1+\displaystyle\sum_{n=1}^\infty(n+1)A_nz^n}.
\end{equation}
From (\ref{2.16}), we can write 
\begin{equation*}
w(z)=\frac{-\displaystyle\frac{\beta^2+3\beta+2}{3\beta+2}\sum_{n=m+1}^\infty(n+1)A_nz^n}{2+2\displaystyle\sum_{n=1}^m (n+1)A_nz^n-\frac{\beta^2-3\beta-2}{3\beta+2}\sum_{n=m+1}^\infty (n+1)A_nz^n}
\end{equation*}
and
\begin{equation*}
|w(z)|<\frac{\displaystyle\frac{\beta^2+3\beta+2}{3\beta+2}\sum_{n=m+1}^\infty(n+1)A_n}{2-2\displaystyle\sum_{n=1}^m (n+1)A_n-\frac{\beta^2-3\beta-2}{3\beta+2}\sum_{n=m+1}^\infty (n+1)A_n}.
\end{equation*}
The last inequality implies that $|w(z)|< 1$ if and only if 
\begin{equation*}
\frac{2\beta^2}{3\beta+2}\sum_{n=m+1}^\infty(n+1)A_n\leq 2-2\sum_{n=1}^m (n+1)A_n
\end{equation*}
or equivalently
\begin{equation}\label{2.17}
\sum_{n=1}^m (n+1)A_n+\frac{\beta^2}{3\beta+2}\sum_{n=m+1}^\infty(n+1)A_n\leq 1.
\end{equation}
It remains to show that the left-hand side of (\ref{2.17}) is bounded above by
\begin{equation*}
\frac{\beta^2}{3\beta+2}\sum_{n=1}^\infty(n+1)A_n.
\end{equation*}
This is equivalent to
\begin{equation*}
\frac{\beta^2-3\beta-2}{3\beta+2}\sum_{n=1}^m(n+1)A_n\geq 0\;\;\textrm{which holds true for}\;\;\beta\geq\frac{3+\sqrt{17}}{2}.
\end{equation*}
 Now, the proof of our theorem is completed.
\end{proof}

\section{Examples}

In this section we give several examples which illustrate our theorems.

A result involving the functions $\mathbb{E}_{2,2}(z)$ and $\mathbb{E}_{2,3}(z)$, defined by (\ref{1.5}), can be obtained from Theorem \ref{t2.1} by taking $m=0, \alpha=2, \beta=2$ and $m=0, \alpha=2, \beta=3$, respectively.
\begin{cor}\label{c3.1}
The following inequalities hold true:
\begin{equation*}
\Re\left\{\frac{\sinh(\sqrt{z})}{\sqrt{z}}\right\}\geq\frac{1}{4}=0,25\;,\;\;\Re\left\{\frac{\sqrt{z}}{\sinh(\sqrt{z})}\right\}\geq\frac{4}{7}\approx 0,57
\end{equation*}
and
\begin{equation*}
\Re\left\{\frac{\cosh(\sqrt{z})-1}{z}\right\}\geq\frac{5}{18}\approx 0,28\;,\;\;\Re\left\{\frac{z}{\cosh(\sqrt{z})-1}\right\}\geq\frac{18}{13}\approx 1,38.
\end{equation*}
\end{cor}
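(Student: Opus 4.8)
\emph{Proof proposal.} The plan is simply to specialize Theorem~\ref{t2.1} to the parameter choices $m=0,\ \alpha=2,\ \beta=2$ and $m=0,\ \alpha=2,\ \beta=3$, and then to rewrite the resulting ratios by means of the closed forms recorded in (\ref{1.5}). First I would note that for $m=0$ the partial sum degenerates to $(\mathbb{E}_{\alpha,\beta})_0(z)=z$, so that inequalities (\ref{2.8}) and (\ref{2.9}) become, respectively, lower bounds for $\Re\{\mathbb{E}_{\alpha,\beta}(z)/z\}$ and for $\Re\{z/\mathbb{E}_{\alpha,\beta}(z)\}$. Before invoking the theorem I would verify its hypotheses for the two instances: in both cases $\alpha=2\geq1$, while $\beta=2$ and $\beta=3$ each exceed $\frac{1+\sqrt{5}}{2}\approx1{,}618$, so Theorem~\ref{t2.1} applies.

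Next, for $\alpha=\beta=2$ I would use $\mathbb{E}_{2,2}(z)=\sqrt{z}\sinh(\sqrt{z})$, hence $\mathbb{E}_{2,2}(z)/z=\sinh(\sqrt{z})/\sqrt{z}$ and $z/\mathbb{E}_{2,2}(z)=\sqrt{z}/\sinh(\sqrt{z})$. Evaluating the right-hand sides of (\ref{2.8}) and (\ref{2.9}) at $\beta=2$ gives $(\beta^2-\beta-1)/\beta^2=1/4=0{,}25$ and $\beta^2/(\beta^2+\beta+1)=4/7\approx0{,}57$, which are precisely the first two asserted inequalities. For $\alpha=2,\ \beta=3$ I would use $\mathbb{E}_{2,3}(z)=2[\cosh(\sqrt{z})-1]$; here the only bookkeeping point worth flagging is the multiplicative constant $2$ coming from the normalization $\mathbb{E}_{\alpha,\beta}(z)=\Gamma(\beta)zE_{\alpha,\beta}(z)$. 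Since $\Re$ is $\mathbb{R}$-linear and $2>0$, the bound $\Re\{\mathbb{E}_{2,3}(z)/z\}\geq(\beta^2-\beta-1)/\beta^2=5/9$ yields $\Re\{(\cosh(\sqrt{z})-1)/z\}\geq5/18\approx0{,}28$, and $\Re\{z/\mathbb{E}_{2,3}(z)\}\geq\beta^2/(\beta^2+\beta+1)=9/13$ yields $\Re\{z/(\cosh(\sqrt{z})-1)\}\geq18/13\approx1{,}38$.

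Finally I would just record the stated decimal approximations to match the phrasing of the corollary. There is no genuine difficulty in this argument; the step most likely to trip one up is the careful tracking of the factor $2$ in the $\beta=3$ case, so that the constants $5/18$ and $18/13$ (rather than $5/9$ and $9/13$) are reported correctly, together with the routine numerical check that $2$ and $3$ both lie above the golden ratio $\frac{1+\sqrt{5}}{2}$.
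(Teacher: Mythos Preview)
Your proposal is correct and follows exactly the approach indicated in the paper, namely specializing Theorem~\ref{t2.1} with $m=0$, $\alpha=2$, $\beta=2$ and with $m=0$, $\alpha=2$, $\beta=3$, and then invoking the closed forms in (\ref{1.5}). In fact you supply more detail than the paper itself: the verification that $\beta=2,3$ exceed $\tfrac{1+\sqrt{5}}{2}$ and the explicit bookkeeping of the factor $2$ in $\mathbb{E}_{2,3}(z)=2[\cosh(\sqrt{z})-1]$ are exactly the right points to make.
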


Setting $m=0, \alpha=2$ and $\beta=4$ in Theorem \ref{2.1} and Theorem \ref{2.2} respectively, we obtain the next result involving the function $\mathbb{E}_{2,4}(z)$, defined by (\ref{1.5}), and its derivative.
\begin{cor}\label{c3.2}
The following inequalities hold true:
\begin{equation*}
\Re\left\{\frac{\sinh(\sqrt{z})-\sqrt{z}}{z\sqrt{z}}\right\}\geq\frac{11}{96}\approx0,11\;,\;\;\Re\left\{\frac{z\sqrt{z}}{\sinh(\sqrt{z})-\sqrt{z}}\right\}\geq\frac{32}{7}\approx 4,57
\end{equation*}
and
\begin{equation*}
\Re\left\{\frac{\sqrt{z}\cosh(\sqrt{z})-\sinh(\sqrt{z})}{z\sqrt{z}}\right\}\geq\frac{1}{24}\approx 0,04\;,\;\;\Re\left\{\frac{z\sqrt{z}}{\sqrt{z}\cosh(\sqrt{z})-\sinh(\sqrt{z})}\right\}\geq\frac{8}{5}=1,6.
\end{equation*}
\end{cor}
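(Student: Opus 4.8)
The plan is to specialize the already-proved Theorems~\ref{t2.1} and~\ref{2.2} to the parameters $m=0$, $\alpha=2$, $\beta=4$ and then to rewrite the resulting estimates using the closed forms recorded in~(\ref{1.5}). First I would check that $\beta=4$ satisfies the hypotheses of both theorems: indeed $\frac{1+\sqrt5}{2}<2<4$ and $\frac{3+\sqrt{17}}{2}<\frac{3+5}{2}=4$, while $\alpha=2\ge1$. For $m=0$ one has $(\mathbb{E}_{2,4})_0(z)=z$, hence $(\mathbb{E}_{2,4})'_0(z)=1$, so the four ratios appearing in~(\ref{2.8}), (\ref{2.9}), (\ref{2.13}), (\ref{2.14}) reduce to $\mathbb{E}_{2,4}(z)/z$, $z/\mathbb{E}_{2,4}(z)$, $\mathbb{E}'_{2,4}(z)$ and $1/\mathbb{E}'_{2,4}(z)$ respectively.

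Next I would evaluate the four lower bounds at $\beta=4$: $\frac{\beta^2-\beta-1}{\beta^2}=\frac{11}{16}$, $\frac{\beta^2}{\beta^2+\beta+1}=\frac{16}{21}$, $\frac{\beta^2-3\beta-2}{\beta^2}=\frac18$ and $\frac{\beta^2}{\beta^2+3\beta+2}=\frac{8}{15}$. From~(\ref{1.5}) we read off $\mathbb{E}_{2,4}(z)=6[\sinh(\sqrt z)-\sqrt z]/\sqrt z$, so that $\mathbb{E}_{2,4}(z)/z=6[\sinh(\sqrt z)-\sqrt z]/(z\sqrt z)$ and $z/\mathbb{E}_{2,4}(z)=(z\sqrt z)/(6[\sinh(\sqrt z)-\sqrt z])$. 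Differentiating $\mathbb{E}_{2,4}(z)=6\sinh(\sqrt z)/\sqrt z-6$ (using $\tfrac{d}{dz}=\tfrac{1}{2\sqrt z}\,\tfrac{d}{d\sqrt z}$) gives $\mathbb{E}'_{2,4}(z)=3[\sqrt z\cosh(\sqrt z)-\sinh(\sqrt z)]/(z\sqrt z)$, and correspondingly $1/\mathbb{E}'_{2,4}(z)=(z\sqrt z)/(3[\sqrt z\cosh(\sqrt z)-\sinh(\sqrt z)])$.

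Finally, since $\Re$ is additive and homogeneous over positive real constants, the inequality $\Re\{\mathbb{E}_{2,4}(z)/z\}\ge\frac{11}{16}$ divides by $6$ to give $\Re\{[\sinh(\sqrt z)-\sqrt z]/(z\sqrt z)\}\ge\frac{11}{96}$, while $\Re\{z/\mathbb{E}_{2,4}(z)\}\ge\frac{16}{21}$ multiplies by $6$ to give $\frac{96}{21}=\frac{32}{7}$; similarly $\Re\{\mathbb{E}'_{2,4}(z)\}\ge\frac18$ divides by $3$ to give $\frac{1}{24}$ and $\Re\{1/\mathbb{E}'_{2,4}(z)\}\ge\frac{8}{15}$ multiplies by $3$ to give $\frac{24}{15}=\frac85$. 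Assembling these four lines yields exactly the asserted inequalities.

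The only step that needs a modicum of care is the explicit differentiation producing $\mathbb{E}'_{2,4}$, because of the $\sqrt z$; however, no real difficulty arises, since $\mathbb{E}_{2,4}$ is analytic in $\mathcal U$ with the power series~(\ref{1.4}), so if preferred the derivative can be computed termwise and then matched against the stated closed form. Everything else is bookkeeping and elementary arithmetic.
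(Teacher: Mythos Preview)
Your proposal is correct and follows exactly the approach indicated in the paper: specialize Theorems~\ref{t2.1} and~\ref{2.2} at $m=0$, $\alpha=2$, $\beta=4$, use the closed form $\mathbb{E}_{2,4}(z)=6[\sinh(\sqrt z)-\sqrt z]/\sqrt z$ from~(\ref{1.5}) together with its derivative, and rescale by the constants $6$ and $3$. You have in fact supplied considerably more detail (the hypothesis checks, the arithmetic of the bounds, and the explicit differentiation) than the paper itself, which simply cites the substitution.
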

\begin{rem}
If we consider $m=0$ in inequality (\ref{2.13}), we obtain $\Re\left\{\mathbb{E}'_{\alpha,\beta}(z)\right\}>0$. In view of Noshiro-Warschawski Theorem (see \cite{goo}), we have that the normalized Mittag-Leffler function is univalent in $\mathcal{U}$ for 
$\alpha\geq 1$ and $\beta\geq\displaystyle\frac{3+\sqrt{17}}{2}$.
\end{rem}

\end{document}